\def\Z{{\mathbb Z}}
\def\F{{\mathbb F}}
\def\CC{{\mathcal C}}
\def\cl{\mathrm{cl}}
\def\supp{\mathrm{supp}}
\def\div{\mathrm{div}}
\def\fchar{\mathrm{char}}
                             \def\cl{\mathrm{cl}}
                             \def\supp{\mathrm{supp}}
     \def\W{{\mathfrak W}}
                          \def\rr{{\mathfrak r}}
                          \def\RR{{\mathfrak R}}
\def\a{{\mathfrak a}}
                               \def\b{{\mathfrak b}}
\newtheorem{thm}{Theorem}[section]
\newtheorem{lem}[thm]{Lemma}
\newtheorem{cor}[thm]{Corollary}
\theoremstyle{definition}
\newtheorem{ex}[thm]{Example}
\newtheorem{sect}[thm]{}
           \newtheorem{rem}[thm]{Remark}
\begin{document}\date{}

\title[Halves of points]{Halves of points of an  odd degree hyperelliptic curve in its jacobian}
\author {Yuri G. Zarhin}

\address{Pennsylvania State University, Department of Mathematics, University Park, PA 16802, USA}

\email{zarhin@math.psu.edu}
\thanks{Partially supported by  Simons Foundation  Collaboration grant  \# 585711. \newline
This paper was started
 during my stay in May-July 2018  at the Max-Planck-Institut f\"ur Mathematik (Bonn, Germany), whose hospitality and support are gratefully acknowledged.}

\subjclass[2010]{14H40, 14G27, 11G10}

\keywords{Hyperelliptic curves, jacobians, Mumford representations}

\begin{abstract}
Let $f(x)$ be a degree $(2g+1)$ monic polynomial with coefficients in an  algebraically closed field $K$ with $\fchar(K)\ne 2$ and without repeated roots. Let $\RR\subset K$ be the $(2g+1)$-element set of  roots of  $f(x)$.
Let $\CC: y^2=f(x)$ be an odd degree genus $g$ hyperelliptic curve over $K$.
Let $J$ be the jacobian of $\CC$ and $J[2]\subset J(K)$ the (sub)group of points of order dividing $2$.
We identify $\CC$ with the image of its canonical embedding into $J$ (the infinite point of $\CC$ goes to the identity element of $J$).  Let $P=(a,b)\in \CC(K)\subset J(K)$ and
$$M_{1/2,P}=\{\a \in J(K)\mid 2\a=P\}\subset J(K),$$ which is $J[2]$-torsor. In a previous work we established an explicit bijection between the sets $M_{1/2,P}$ and
$$\RR_{1/2,P}:=\{\rr: \RR\to K\mid \rr(\alpha)^2=a-\alpha \ \forall \alpha\in\RR; \ 
 \prod_{\alpha\in\RR}\rr(\alpha)=-b\}.$$
The aim of this paper is to describe the induced action of $J[2]$ on $\RR_{1/2,P}$ (i.e., how  signs of  square roots $r(\alpha)=\sqrt{a-\alpha}$ should change).
\end{abstract}

\maketitle

\section{Introduction}

Let $K$ be an algebraically closed field of characteristic different from 2, $g$ a positive integer, $\RR\subset K$ a $(2g+1)$-element set,
$$f(x)=f_{\RR}(x):=\prod_{\alpha\in \RR}(x-\alpha)$$ a degree $(2g+1)$ polynomial with coefficients in $K$ and without repeated roots, 
$\CC:y^2=f(x)$ the corresponding genus $g$ hyperelliptic curve over $K$, and $J$ the jacobian of $\CC$. We identify $\CC$ with the image of its canonical embedding 
$$\CC\hookrightarrow J, \ P \mapsto \cl((P)-(\infty))$$
into $J$ (the infinite point $\infty$ of $\CC$ goes to the identity element of $J$). 
Let $J[2] \subset J(K)$ be the kernel of multiplication by $2$ in $J(K)$, which is a $2g$-dimensional $\mathbb{F}_2$-vector space.  All the $(2g+1)$ points 
$$\W_{\alpha}:=(\alpha,0)\in \CC(K)\subset J(K) \  (\alpha\in\RR)$$
lie in $J[2]$ and generate it as the $2g$-dimensional $\mathbb{F}_2$-vector space; they satisfy the only relation
$$\sum_{\alpha\in\RR}\W_{\alpha}=0 \in J[2]\subset J(K).$$
This leads to a well known canonical isomorphism \cite{Mumford} between $\mathbb{F}_2$-vector spaces $J[2]$ and
$$({\mathbb{F}_2}^{\RR})^0=\{\phi: \RR \to \F_2\mid \sum_{\alpha\in \RR}\phi(\alpha)=0\}.$$
 Namely, each function  $\phi \in ({\mathbb{F}_2}^{\RR})^0$ corresponds to $$\sum_{\alpha\in\RR}\phi(\alpha)\W_{\alpha}\in J[2].$$
For example,  for each $\beta\in \RR$ the point
$\W_{\beta}=\sum_{\alpha\ne \beta}\W_{\alpha}$
corresponds to the function $\psi_{\beta}:\RR \to\F_2$ that sends $\beta$ to $0$ and all other elements of $\RR$ to $1$.

 If $\b \in J(K)$ then the finite set
$$M_{1/2,\b}:=\{\a \in J(K)\mid 2\a=\b\}\subset J(K)$$
consists of $2^{2g}$ elements and
carries the natural structure of a $J[2]$-torsor. 

Let
$$P=(a,b) \in \CC(K)\subset J(K).$$
Let us consider, the set 
$$\RR_{1/2,P}:=\{\rr: \RR\to K\mid \rr(\alpha)^2=a-\alpha \ \forall \alpha\in\RR; \ 
 \prod_{\alpha\in\RR}\rr(\alpha)=-b\}.$$ 
Changes of signs in the (even number of) square roots provide $\RR_{1/2,P}$ with the natural structure of a $({\mathbb{F}_2}^{\RR})^0$-torsor.  Namely, let
$$\chi: \F_2 \to K^{*}$$
be the {\bf additive character} such that
$$\chi(0)=1, \chi(1)=-1.$$
Then the result of the action of a function $\phi:\RR \to \F_2$ from $({\mathbb{F}_2}^{\RR})^0$ on $\rr: \RR\to K$ from $\RR_{1/2,P}$ is just the product
$$\chi(\phi)\rr: \RR \to K, \ \alpha \mapsto \chi(\phi(\alpha))\rr(\alpha).$$
On the other hand, I constructed in \cite{ZarhinHyper2} an explicit {\sl bijection} of finite sets  
$$\RR_{1/2,P} \cong M_{1/2,P}, \ \rr\mapsto \a_{\rr}\in M_{1/2,P}\subset J(K).$$
Identifying (as above) $J[2]$ and $({\mathbb{F}_2}^{\RR})^0$, we obtain a second
structure of a  $({\mathbb{F}_2}^{\RR})^0$-torsor on $\RR_{1/2,P}$. Our main result asserts that these two structures actually coincide. In down-to-earth terms this means the following.

\begin{thm}
\label{signMain}
Let $\rr \in \RR_{1/2,P}$ and $\beta \in \RR$. Let us define $\rr^{\beta}\in \RR_{1/2,P}$ as follows.
$$\rr^{\beta}(\beta)=\rr(\beta), \ \rr^{\beta}(\alpha)=-\rr(\alpha) \ \forall \alpha \in \RR\setminus \{\beta\}.$$
Then
$$\a_{\rr^{\beta}}=\a_{\rr}+\W_{\beta}=\a_{\rr}+\left(\sum_{\alpha\ne \beta}\W_{\alpha}\right).$$
\end{thm}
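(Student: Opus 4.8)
The plan is to avoid computing with the explicit bijection of \cite{ZarhinHyper2} and instead to deduce the statement by a monodromy (rigidity) argument. First reformulate: by the description recalled in the introduction, $\rr\mapsto\rr^{\beta}$ is precisely the action on the $({\mathbb{F}_2}^{\RR})^0$-torsor $\RR_{1/2,P}$ of the function $\psi_{\beta}$ (sending $\beta$ to $0$ and every other root to $1$), and $\psi_{\beta}$ corresponds to $\W_{\beta}$ under the canonical isomorphism $({\mathbb{F}_2}^{\RR})^0\cong J[2]$. Since $\a_{\rr^{\beta}}$ and $\a_{\rr}$ both lie in the $J[2]$-torsor $M_{1/2,P}$, their difference is $2$-torsion, so we may set $\delta_{\beta}(\rr):=\a_{\rr^{\beta}}-\a_{\rr}-\W_{\beta}\in J[2]$; the theorem becomes the assertion that $\delta_{\beta}\equiv 0$ (for all $\rr$, all $\beta$, all $(\CC,P)$).

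Now spread everything out over moduli. Let $\mathcal{M}$ be the scheme of tuples $(\RR,\beta,P,\rr)$ with $\RR\subset\mathbb{A}^1$ a $(2g+1)$-point set, $\beta\in\RR$ marked, $P=(a,b)\in\CC_{\RR}$, and $\rr\in\RR_{1/2,P}$, fibred over the configuration space of $(\RR,\beta)$. Because the bijection $\rr\mapsto\a_{\rr}$ of \cite{ZarhinHyper2} is given by explicit polynomial formulas, $\delta_{\beta}$ is a morphism from $\mathcal{M}$ to the $2$-torsion $\mathcal{J}[2]$ of the relative jacobian, which is finite étale over the base because $\fchar(K)\ne 2$; hence $\delta_{\beta}$ is locally constant on $\mathcal{M}$. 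And $\mathcal{M}$ is connected: fixing $\RR,\beta$ and moving $P=(a,b)$, a small loop in the $a$-line encircling exactly two roots $\{\alpha_1,\alpha_2\}$ lifts to a loop on $\CC_{\RR}$ (the value $b=-\prod_{\alpha}\rr(\alpha)$ returns to itself, since $\prod_{\alpha}\rr(\alpha)$ changes sign an even number of times) and continues $\rr$ to $\rr$ with its signs changed at exactly $\alpha_1$ and $\alpha_2$; such sign changes generate $({\mathbb{F}_2}^{\RR})^0$, so already this monodromy acts transitively on each fibre $\RR_{1/2,P}$, and $\mathcal{M}$ is connected. (One uses étale fundamental groups to be uniform in the characteristic; the divisor $a\in\RR$ may be removed and then filled back in.)

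Being locally constant on the connected $\mathcal{M}$, the function $\delta_{\beta}$ is a single monodromy-invariant element of $J[2]\cong({\mathbb{F}_2}^{\RR})^0$. The monodromy acts on $J[2]$ through its permutation action on the Weierstrass points $\W_{\alpha}=(\alpha,0)$, hence through permutations of $\RR$ fixing $\beta$, and in fact through all of $\Perm(\RR\setminus\{\beta\})$ (the $2g$ roots other than $\beta$ still move freely, the monodromy used for connectedness having been root-preserving). The only elements of $({\mathbb{F}_2}^{\RR})^0$ fixed by $\Perm(\RR\setminus\{\beta\})$ are $0$ and $\psi_{\beta}$: such a $\phi$ is constant on $\RR\setminus\{\beta\}$, and then $\phi(\beta)=0$ because $2g$ is even. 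Hence $\delta_{\beta}\in\{0,\W_{\beta}\}$ identically. If $\delta_{\beta}=\W_{\beta}$, then $\a_{\rr^{\beta}}=\a_{\rr}+2\W_{\beta}=\a_{\rr}$; but $\rr^{\beta}\ne\rr$, since they differ at any $\alpha\in\RR\setminus\{\beta\}$ with $\rr(\alpha)\ne 0$, and such an $\alpha$ exists because $|\RR\setminus\{\beta\}|=2g\ge 2$ while $a-\alpha=0$ for at most one $\alpha$. This contradicts the injectivity of $\rr\mapsto\a_{\rr}$, so $\delta_{\beta}=0$.

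The step I expect to be the main obstacle is making the "spreading out" precise: one must verify that the construction of \cite{ZarhinHyper2} is genuinely algebraic in families (so that $\delta_{\beta}$ is a morphism), and invoke (or reprove) that $\mathcal{J}[2]$ is finite étale over the relevant moduli in characteristic $\ne 2$ with monodromy acting exactly by permutations of the branch points. A hands-on alternative that avoids the family entirely is to recall that $\a_{\rr}=\cl(D_{\rr}-g(\infty))$ with $D_{\rr}$ read off from the factorization $f(x)-c_{\rr}(x)^{2}=(x-a)q_{\rr}(x)^{2}$ (so that $\div\bigl((y-c_{\rr}(x))/(x-a)\bigr)=2D_{\rr}-(P)-(2g-1)(\infty)$), to determine how $(c_{\rr},q_{\rr})$ changes under $\rr\mapsto\rr^{\beta}$, and to exhibit an explicit $H\in K(\CC)$ with $H^{2}=(y-c_{\rr^{\beta}}(x))(y+c_{\rr}(x))\big/\bigl((x-a)(x-\beta)q_{\rr}(x)^{2}\bigr)$, whose divisor then equals $D_{\rr^{\beta}}-D_{\rr}-(\beta,0)+(\infty)$, giving the linear equivalence $\a_{\rr^{\beta}}=\a_{\rr}+\W_{\beta}$; but that route demands the delicate bookkeeping of how the sign normalization and the condition $\prod_{\alpha}\rr(\alpha)=-b$ propagate under adding $\W_{\beta}$.
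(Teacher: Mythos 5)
Your argument is correct in outline, and it takes a genuinely different route from the paper's, although both are rigidity arguments. The paper fixes the branch locus $\RR$, works on the affine curve $\tilde{\CC}\subset\mathbb{A}^{2g+1}$ whose fibre over $P\in\CC\setminus\{\infty\}$ is $\RR_{1/2,P}$, proves $\tilde{\CC}$ irreducible by a valuation-theoretic Eisenstein lemma (Lemma \ref{fieldExt}, Example \ref{irredTildeC}), and compares two \emph{regular involutions} of $\tilde{\CC}$ --- sign change away from $\beta$, and ``add $\W_{\beta}$'' written out through the explicit formulas \eqref{UVtoRR1}, \eqref{UVtoRR2}, \eqref{UWi}, \eqref{VWi}; irreducibility forces them to coincide once they coincide on a single fibre, and that fibre is the explicitly computed special case $P=\W_{\beta}$ (Example \ref{specialB}). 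You instead let $\RR$ vary, which buys you the permutation monodromy $\Perm(\RR\setminus\{\beta\})$ on $J[2]$: invariance pins $\delta_{\beta}$ to $\{0,\W_{\beta}\}$, and you discard $\W_{\beta}$ by injectivity of $\rr\mapsto\a_{\rr}$ rather than by the special case. Note that with $\RR$ fixed the monodromy on $J[2]$ is trivial, so your invariance step yields nothing there --- enlarging the base is essential to your route, while the paper never needs it. The price is exactly the technical debt you flag: one must make $\rr\mapsto\a_{\rr}$ a morphism to the \emph{relative} Jacobian over a moduli space (the paper only needs regularity of self-maps of the fixed variety $\tilde{\CC}$, which the explicit formulas give directly), one must know that the mod-$2$ monodromy of the hyperelliptic family surjects onto the symmetric group in every characteristic $\ne 2$ (true, but an extra input), and one must check that the stabilizer of a point $\rr_0$ in the fibre still surjects onto $\Perm(\RR\setminus\{\beta\})$ --- your parenthetical remark that the translations used for connectedness can correct any permutation loop does handle this. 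Finally, your connectedness-of-fibres step (double sign flips generate $(\F_2^{\RR})^0$) is precisely the irreducibility of $\tilde{\CC}$; in characteristic $p$ you should replace the loops in the $a$-line by the paper's algebraic argument or an \'etale-$\pi_1$ specialization, as you note. None of these gaps is fatal, but each is real work that the paper's more economical anchoring at $P=\W_{\beta}$ avoids.
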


\begin{rem}
In the case of elliptic curves (i.e., when $g=1$) the assertion of Theorem \ref{signMain} was proven in \cite[Th. 2.3(iv)]{BZ}.
\end{rem}

\begin{ex}
\label{specialB}
If $P=\W_{\beta}=(\beta,0)$ then 
$$\a_{\rr}+\W_{\beta}=\a_{\rr}-\W_{\beta}=\a_{\rr}-2\a_{\rr}=-\a_{\rr}$$
while 
$$-\a_{\rr}=\a_{-\rr}$$
 (see \cite[Remark 3.5]{ZarhinHyper2}).
On the other hand,
$\rr(\beta)=\sqrt{\beta-\beta}=0$ for all $\rr$ and
$$\rr^{\beta}=-\rr: \alpha \mapsto -\rr(\alpha) \ \forall \alpha \in \RR.$$
This implies that 
$$\a_{\rr^{\beta}}=\a_{-\rr}=\a_{\rr}+\W_{\beta}.$$
This proves Theorem \ref{signMain} in the special case $P=\W_{\beta}$.
\end{ex}

The paper is organized as follows. In Section  \ref{divisors} we recall basic facts about 
Mumford representations of points of $J(K)$ and review results of \cite{ZarhinHyper2}, including an explicit description of the bijection between 
$\RR_{1/2,P}$ and $M_{1/2,P}$. 
In Section \ref{transW} we give explicit formulas for the Mumford representation of $\a+\W_{\beta}$ when
$\a$ lies neither on the theta divisor of $J$ nor on its translation by $\W_{\beta}$, assuming that we know the Mumford representation of $\a$. In Section \ref{quadrics} we prove Theorem \ref{signMain}, using auxiliary results from commutative algebra that are proven in Section \ref{useful}.

\section{Halves and square roots}
\label{divisors}

Let $\CC$ be the smooth projective model of the smooth affine plane $K$-curve
$$y^2=f(x)=\prod_{\alpha\in\RR}(x-\alpha)$$
where $\RR$ is a $(2g+1)$-element subset  of $K$. In particular, $f(x)$ is a monic degree $(2g+1)$ polynomial without repeated roots.
 It is well known that $\CC$ is a genus $g$ hyperelliptic curve over $K$ with precisely one {\sl infinite} point, which we denote by $\infty$.  In other words,
$$\CC(K)=\{(a,b)\in K^2\mid  b^2=\prod_{\alpha\in \RR}(a-\alpha_i)\}\bigsqcup \{\infty\} .$$
Clearly, $x$ and $y$ are nonconstant rational functions on $\CC$, whose only pole is $\infty$. More precisely, the polar divisor of $x$ is $2 (\infty)$ and the polar divisor of $y$ is $(2g+1)(\infty)$. The zero divisor of $y$ is $\sum_{\alpha\in\RR} (\W_{\alpha})$.
In particular, $y$ is a {\sl local parameter} at (every) $\W_{\alpha}$. 

 We write $\iota$ for the hyperelliptic involution 
$$\iota: \CC \to \CC,  (x,y)\mapsto (x,-y), \ \infty \mapsto\infty.$$
The set  of fixed points of $\iota$ consists of $\infty$ and all $\W_{\alpha}$ ($\alpha\in\RR$).
It is well known that for each $P \in \CC(K)$ the divisor $(P)+\iota(P)-2(\infty)$ is principal. 
More precisely, if $P=(a,b)\in \CC(K)$ then $(P)+\iota(P)-2(\infty)$ is the divisor of the rational function $x-a$ on $C$. 
In particular, if $P=\W_{\alpha}=(\alpha,0)$ then 
$$2(\W_{\alpha})-2(\infty)=\div(x-\alpha).$$
In particular, $x-\alpha$ has a double zero at $\W_{\alpha}$ (and no other zeros).
 If $D$ is a divisor on $\CC$ then we write $\supp(D)$ for its {\sl support}, which is a finite subset of $\CC(K)$.

Recall that  the jacobian $J$   of $\CC$ is a $g$-dimensional abelian variety over $K$.  If $D$ is a degree zero divisor on $\CC$ then we write $\cl(D)$ for its linear equivalence class, which is viewed as an element of $J(K)$. Elements of $J(K)$ may be described in terms of so called {\bf Mumford representations} (see  \cite[Sect. 3.12]{Mumford}, \cite[Sect. 13.2]{Wash} and Subsection \ref{MumfordRep} below).

We will identify $\CC$ with its image in $J$ with respect to the canonical regular map $\CC \hookrightarrow J$ under which  $\infty$ goes to 
the identity element of $J$. In other words, a point $P \in \CC(K)$ is identified with  $\cl((P)-(\infty))\in J(K)$. Then the action of 
the hyperelliptic involution $\iota$ on  $\CC(K)\subset J(K)$ coincides with multiplication by $-1$ on $J(K)$.
In particular, the list of points of order  2 on $\CC$ consists of  all $\W_{\alpha}$ ($\alpha\in\RR$).

\begin{sect}
\label{biject}
Since $K$ is algebraically closed, the commutative group $J(K)$ is divisible. It is well known that for each $\b \in J(K)$ there are exactly $2^{2g}$ elements $\a \in J(K)$ such that $2\a=\b$. In \cite{ZarhinHyper2} we established explicitly the following bijection $\rr \mapsto \a_{\rr}$ between the $2^{2g}$-element  sets $\RR_{1/2,P}$ and $M_{1/2,P}$.

If $\rr\in \RR_{1/2,P}$ then for each positive integer $i \le 2g+1$ let us consider $\mathbf{s}_i(\rr)\in K$ defined as
the value of  $i$th basic symmetric function  at $(2g+1)$ elements $\{\rr(\alpha)\mid \alpha\in \RR\}$ (notice that all $\rr(\alpha)$ are distinct, since their squares $\rr(\alpha)^2=a-\alpha$ are distinct). Let us consider the degree $g$ monic polynomial
$$
U_{\rr}(x)=(-1)^g \left[(a-x)^g+\sum_{j=1}^g \mathbf{s}_{2j}(\rr)(a-x)^{g-j}\right],$$
and the polynomial
$$V_{\rr}(x)=\sum_{j=1}^g \left(\mathbf{s}_{2j+1}(\rr)-\mathbf{s}_1(\rr)\mathbf{s}_{2j}(\rr)\right)(a-x)^{g-j}$$
whose degree is {\sl strictly less} than $g$.
Let $\{c_1, \dots, c_g\}\subset K$ be the collection of all $g$ roots of $U_{\rr}(x)$, i.e.,
$$U_{\rr}(x)=\prod_{j=1}^g(x-c_j) \in K[x].$$
Let us put 
$$d_j=V_{\rr}(c_j) \ \forall j=1, \dots, g.$$
It is proven in \cite[Th. 3.2]{ZarhinHyper2} that  $Q_j=(c_j,d_j)$ lies in $\CC(K)$ for all $j$ and
$$\a_{\rr}:=\cl\left(\left(\sum_{j=1}^g (Q_j)\right)-g(\infty)\right)\in J(K)$$
satisfies $2\a_{\rr}=P$, i.e., $\a_{\rr}\in M_{1/2,P}$. In addition, {\bf none of $Q_j$ coincides with any $\W_{\alpha}$}, i.e., 
$$U_{\rr}(\alpha)\ne 0, \ c_j \ne \alpha, \  d_j \ne 0.$$
The main result of \cite{ZarhinHyper2} asserts that the map
$$\RR_{1/2,P} \to M_{1/2,P}, \ \rr \mapsto \a_{\rr}$$ is a {\bf bijection}. 
\end{sect}

\begin{rem}
\label{UVtoRR}
Notice that one may express explicitly $\rr$ in terms of $U_{\rr}(x)$ and $V_{\rr}(x)$. Namely \cite[Th. 3.2]{ZarhinHyper2}, {\sl none} of $\alpha \in \RR$ is a root of $U_{\rr}(x)$ and
\begin{equation}
\label{UVtoRR1}
\rr(\alpha)=\mathbf{s}_1(\rr)+(-1)^g\frac{V_{\rr}(\alpha)}{U_{\rr}(\alpha)}
 \  \text{ for all }  \alpha \in \RR.
 \end{equation}
In order to determine $\mathbf{s}_1(\rr)$, let us fix two {\sl distinct}  roots $\beta,\gamma \in \RR$.  Then \cite[Cor. 3.4]{ZarhinHyper2}
$$\frac{V_{\rr}(\gamma)}{U_{\rr}(\gamma)}\ne \frac{V_{\rr}(\beta)}{U_{\rr}(\beta)}$$
and
\begin{equation}
\label{UVtoRR2}
\mathbf{s}_1(\rr)=\frac{(-1)^g}{2}\times \frac{\left(\beta +\left(\frac{V_{\rr}(\beta)}{U_{\rr}(\beta)}\right)^2\right)-\left(\gamma +\left(\frac{V_{\rr}(\gamma)}{U_{\rr}(\gamma)}\right)^2\right)}{ \frac{V_{\rr}(\gamma)}{U_{\rr}(\gamma)}-\frac{V_{\rr}(\beta)}{U_{\rr}(\beta)}}.
\end{equation}
\end{rem}

\begin{sect}
\label{MumfordRep} {\bf Mumford representations}  (see \cite[Sect. 3.12]{Mumford}, \cite[Sect. 13.2, pp. 411--415, especially, Prop. 13.4, Th. 13.5 and Th. 13.7]{Wash}). 
Recall \cite[Sect. 13.2, p. 411]{Wash} that if $D$ is an effective divisor on $\CC$ of (nonnegative) degree $m$, whose support does {\sl not} contain $\infty$, then the degree zero divisor $D-m(\infty)$ is called {\sl semi-reduced} if it enjoys the following properties.

\begin{itemize}
\item
If $\W_{\alpha}$ lies in $\supp(D)$ then it appears in $D$ with multiplicity 1.
\item
If a   point $Q$ of $\CC(K)$ lies in $\supp(D)$ and does not coincide with any of $\W_{\alpha}$  then $\iota(Q)$ does {\sl not} lie in $\supp(D)$.
\end{itemize}
If, in addition, $m \le g$ then $D-m(\infty)$ is called {\sl reduced}.

It is known (\cite[Ch. 3a]{Mumford}, \cite[Sect. 13.2, Prop. 3.6 on p. 413]{Wash}) that for each $\a \in J(K)$ there exist {\sl exactly one}  nonnegative  $m$ and  (effective) degree $m$ divisor  $D$  such that the degree zero divisor $D-m(\infty)$ is {\sl reduced} and  $\cl(D-m(\infty))=\a$.    If 
$$m\ge 1, \ D=\sum_{j=1}^m(Q_j)\ \mathrm{ where }  \ Q_j=(a_j,b_j) \in \CC(K) \ \text{ for all } \  j=1, \dots , m$$
(here $Q_j$ do {\sl not} have to be distinct)
then the corresponding
$$\a=\cl(D-m(\infty))=\sum_{j=1}^m Q_j \in J(K).$$
The {\sl Mumford representation}
of $\a \in J(K)$ 
is the pair $(U(x),V(x))$ of polynomials $U(x),V(x)\in K[x]$ such that 
$$U(x)=\prod_{j=1}^m(x-a_j)$$
is a degree $m$ monic polynomial while $V(x)$ has degree $<m=\deg(U)$, the polynomial  $V(x)^2-f(x)$ is divisible by $U(x)$, and
$$b_j=V(a_j), \ Q_j=(a_j,V(a_j))\in \CC(K) \  \text{ for all }  \ j=1, \dots m.$$
(Here $(a_j,b_j)$ are as above.)
  Such a pair always exists,  is unique, and (as we have just seen) uniquely determines not only $\a$ but also divisors $D$  and $D-m(\infty)$.

Conversely, if $U(x)$ is a  monic polynomial of degree $m\le g$ and $V(x)$ a polynomial such that $\deg(V)<\deg(U)$ and $V(x)^2-f(x)$ is divisible by $U(x)$ 
then there exists exactly one  $\a=\cl(D-m(\infty))$ where $D-m(\infty)$ is a reduced divisor and $(U(x),V(x))$ is the Mumford representation  of 
$\a=\cl(D-m(\infty))$.
\end{sect}

\begin{sect}
\label{halfRed}
In the notation of Subsect. \ref{biject}, let us consider the effective degree $g$ divisor
$$D_{\rr}:=\sum_{j=1}(Q_j)$$
on $\CC$. Then $\supp(D_{\rr})$ (obviously) does contain {\sl neither} $\infty$ {\sl nor} any of $\W_{\alpha}$'s. It is proven in \cite[Th. 3.2]{ZarhinHyper2} that the divisor $D_{\rr}-g(\infty)$ is {\sl reduced} and the pair $(U_{\rr}(x), V_{\rr}(x))$ is the Mumford representation of 
$$\a_{\rr}:=\cl(D_{\rr}-g(\infty)).$$
In particular, if $Q\in C(K)$ lies in $\supp(D)$ (i.e., is one of  $Q_j$'s) then $\iota(Q)$ does {\sl not}.
\end{sect}

\begin{lem}
\label{key}
Let $D$ be an effective divisor on $\CC$ of  degree $m>0$ such that $m \le 2g+1$ and $\supp(D)$ does not contain $\infty$. Assume that the divisor $D-m(\infty)$ is principal.

\begin{enumerate}
\item[(1)]
Suppose that $m$ is odd.
 Then:
 
 \begin{itemize}
 \item[(i)]
  $m=2g+1$ and there exists exactly one polynomial $v(x)\in K[x]$ such that  the divisor of $y-v(x)$ coincides with $D-(2g+1)(\infty)$. In addition, $\deg(v)\le g$.
    \item[(ii)]
    If $\W_{\alpha}$ lies in  $\supp(D)$ then it appears in $D$ with multiplicity 1.
    \item[(iii)]
    If $b$ is a nonzero element of $K$ and  $P=(a,b) \in \CC(K)$ lies in  $\supp(D)$ then $\iota(P)=(a,-b)$ does not lie in  $\supp(D)$.
    \item[(iv)] $D-(2g+1)(\infty)$ is semi-reduced (but not reduced).
  \end{itemize}
  \item[(2)]
  Suppose that $m=2d$ is even. Then:
\begin{itemize}
 \item[(i)]
 there exists exactly one monic degree $d$ polynomial $u(x)\in K[x]$ such that  the divisor of $u(x)$ coincides with $D-m(\infty)$; 
\item[(ii)]
every point $Q \in \CC(K)$ appears in $D-m(\infty)$ with the same multiplicity as $\iota(Q)$;
\item[(iii)]
 every $\W_{\alpha}$ appears in $D-m(\infty)$ with even multiplicity.
\end{itemize}
  \end{enumerate}
\end{lem}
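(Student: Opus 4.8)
The plan is to realize the principality of $D-m(\infty)$ by an explicit rational function, put that function into a polynomial normal form $h=A(x)+B(x)\,y$ with $A,B\in K[x]$, and then let the parity of $m$, together with the local behaviour of $y$ and $x-\alpha$ at the points $\W_\alpha$, do all the work. No single step is a serious obstacle; the only place that needs genuine care is the reduction to the polynomial normal form and the bookkeeping of the order at $\infty$.

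First I would pick $h\in K(\CC)^{*}$ with $\div(h)=D-m(\infty)$. Since $D$ is effective and $\infty\notin\supp(D)$, this $h$ is regular on $\CC\setminus\{\infty\}$. Writing $h=A(x)+B(x)\,y$ with $A,B\in K(x)$ via $K(\CC)=K(x)\oplus K(x)\,y$, and using that $\iota$ fixes $\infty$ so that $h\circ\iota=A(x)-B(x)\,y$ again has effective divisor away from $\infty$, I get that $A=\tfrac12(h+h\circ\iota)$ and $B(x)\,y=\tfrac12(h-h\circ\iota)$ are both regular on $\CC\setminus\{\infty\}$. A rational function of $x$ regular on $\CC\setminus\{\infty\}$ can have poles only over $x=\infty$, so $A\in K[x]$, and then $(B(x)\,y)^{2}=B(x)^{2}f(x)\in K[x]$ forces $B\in K[x]$ because $f$ is squarefree. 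Next I would record $\mathrm{ord}_{\infty}(A(x))=-2\deg A$ (even) and $\mathrm{ord}_{\infty}(B(x)\,y)=-2\deg B-(2g+1)$ (odd); these are distinct, so there is no cancellation and $-m=\mathrm{ord}_{\infty}(h)=\min\{-2\deg A,\ -2\deg B-(2g+1)\}$.

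Then the parity of $m$ decides which term wins. If $m$ is odd, the odd term is the smaller one, so $m=2\deg B+(2g+1)$; combined with $m\le 2g+1$ this forces $\deg B=0$ (note $B\ne 0$, else $m$ would be even), hence $m=2g+1$ and $h=A(x)+c_{0}y$ with $c_{0}\in K^{*}$ and $\deg A\le g$. Setting $v(x)=-c_{0}^{-1}A(x)$ gives $\div(y-v(x))=D-(2g+1)(\infty)$ with $\deg v\le g$, which is (1)(i); uniqueness of $v$ follows since two such functions differ by a constant that must be $1$ as $y\notin K(x)$. If $m=2d$ is even, the even term must win, so if $B\ne 0$ then $\deg A\ge\deg B+g+1\ge g+1$ and $m=2\deg A\ge 2g+2>2g+1$, contradicting $m\le 2g+1$; hence $B=0$, $h=A(x)$ is a polynomial of degree $d$, and its monic rescaling is the unique $u(x)$ of (2)(i) with $\div(u(x))=D-2d(\infty)$. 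From $\div_{\CC}(u(x))=x^{*}(\div_{\P^1}(u))$ for the degree-$2$ map $x\colon\CC\to\P^1$ I read that $D$ is the pullback of the zero divisor of $u$, hence $\iota$-invariant with even multiplicity at every ramification point $\W_\alpha$, which is (2)(ii)--(iii).

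Finally, for the remaining assertions of part (1) I would argue locally from $D=\div(y-v(x))+(2g+1)(\infty)$. At $\W_\alpha=(\alpha,0)$, $y$ is a local parameter and $v(x)-v(\alpha)$ vanishes to order $\ge 2$ (because $x-\alpha$ has a double zero there), so $\mathrm{ord}_{\W_\alpha}(y-v(x))$ is $0$ when $v(\alpha)\ne 0$ and exactly $1$ when $v(\alpha)=0$; this gives (1)(ii). If $P=(a,b)$ with $b\ne 0$ lies in $\supp(D)$ then necessarily $v(a)=b$, and $y-v(x)$ takes the value $-b-v(a)=-2b\ne 0$ at $\iota(P)=(a,-b)$ — this is where $\fchar(K)\ne 2$ enters — so $\iota(P)\notin\supp(D)$, which is (1)(iii). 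Then (1)(iv) is the conjunction of (ii) and (iii) (a finite point not among the $\W_\alpha$ has $b\ne 0$) together with the observation that $2g+1>g$, so $D-(2g+1)(\infty)$ is semi-reduced but not reduced. The substantive point is thus Step 1 — the polynomial normal form and the order count at $\infty$; everything after that is a parity argument plus short local computations, and I do not anticipate a real difficulty.
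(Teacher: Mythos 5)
Your proof is correct. The paper's own proof of this lemma is essentially a citation: everything except (2)(iii) is delegated to \cite[Lemma 2.2]{ZarhinHyper2}, and the one assertion proved in the text, (2)(iii), is obtained by factoring the polynomial as $(x-\alpha)^{d}u_1(x)$ with $u_1(\alpha)\ne 0$ and using that $x-\alpha$ has a double zero at $\W_{\alpha}$ --- which is exactly the same local computation as your pullback argument $\div_{\CC}(u(x))=x^{*}\bigl(\div_{\mathbf{P}^1}(u)\bigr)$. Your normal form $h=A(x)+B(x)y$ with the parity analysis of $\mathrm{ord}_{\infty}$ is the standard argument underlying the cited lemma, so you have supplied in full, and correctly, what the paper outsources.
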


\begin{proof}
All the assertions except (2)(iii) are already proven in \cite[Lemma 2.2]{ZarhinHyper2}.
In order to prove the remaining one, let us split the polynomial $v(x)$ into a product
$v(x)=(x-\alpha)^{d}v_1(x)$ where $d$ is a nonnegative integer and 
$v_1(x)\in K[x]$ satisfies $v_1(\alpha) \ne 0$.  Then $\W_{\alpha}$ appears in 
$D-m(\infty)$ with multiplicy $2d$, because $(x-\alpha)$ has a double zero at $\W_{\alpha}$. (See also \cite{StollHC}.)
\end{proof}

Let $d \le g$ be a positive integer and $\Theta_d \subset J$ be the image of the regular map
$$\CC^{d} \to J, \ (Q_1, \dots , Q_{d}) \mapsto \sum_{i=1}^{d} Q_i\subset J.$$
It is well known that $\Theta_d$ is an irreducible closed $d$-dimensional subvariety of $J$ that coincides with $\CC$ for $d=1$ and with $J$ if $d =g$; in addition, $\Theta_d\subset\Theta_{d+1}$ for all $d<g$. Clearly, each $\Theta_d$ is stable under multiplication by $-1$ in $J$.
We write $\Theta$ for the $(g-1)$-dimensional {\sl theta divisor} $\Theta_{g-1}$.  

\begin{thm}[See Th. 2.5 of \cite{ZarhinHyper2}]
\label{notheta}
Suppose that $g>1$ and 
let $$\CC_{1/2}:=2^{-1}\CC \subset J$$
 be the preimage of $\CC$ with respect to multiplication by 2 in $J$.  Then the intersection of $\CC_{1/2}(K)$ and $\Theta$ 
consists of points of order dividing  $2$ on $J$. In particular, the intersection of $\CC$ and $\CC_{1/2}$ consists of $\infty$ and all $\W_{\alpha}$'s. 
\end{thm}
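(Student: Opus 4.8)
The plan is to reduce the whole statement to the parity obstruction contained in Lemma~\ref{key}(1): an effective divisor $E$ on $\CC$ of \emph{odd} degree $m\le 2g+1$ with $\infty\notin\supp(E)$ and $E-m(\infty)$ principal forces $m=2g+1$. So let $\a\in J(K)$ with $\a\in\Theta=\Theta_{g-1}$ and $2\a=P$ for some $P\in\CC(K)\subset J(K)$; the goal is to show $2\a=0$. Since $\a\in\Theta_{g-1}$, we may write $\a=\sum_{i=1}^{g-1}Q_i$ with $Q_i\in\CC(K)$, and, after discarding those $Q_i$ that equal $\infty$ (they contribute $0$ to the sum), we present $\a=\cl(D-m(\infty))$ with $D=\sum_{i=1}^{m}(Q_i')$ effective of degree $m\le g-1$ and $\infty\notin\supp(D)$. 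If $m=0$ then $\a=0$ and there is nothing to prove, so assume $m\ge 1$.

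Suppose, toward a contradiction, that $P\ne\infty$; write $P=(a,b)$, with $b\in K$ possibly zero. Because the hyperelliptic involution $\iota$ acts as $-1$ on $J(K)$, the point $\iota(P)\in\CC(K)$ equals $-P$ in $J(K)$, and $\iota(P)\ne\infty$. Hence in $J(K)$ we have
$$2\a+\iota(P)=2\a-P=0,$$
which says precisely that the degree-zero divisor $\bigl(2D+(\iota(P))\bigr)-(2m+1)(\infty)$ is principal. Set $E:=2D+(\iota(P))$. Then $E$ is effective of degree $2m+1$, $\infty\notin\supp(E)$, and $2m+1\le 2(g-1)+1=2g-1<2g+1$. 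Since $2m+1$ is odd, Lemma~\ref{key}(1)(i) applies to $E$ and yields $2m+1=2g+1$, i.e. $m=g$, contradicting $m\le g-1$. Therefore $P=\infty$, i.e. $2\a=0$, which proves $\Theta\cap\CC_{1/2}(K)\subseteq J[2]$.

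It remains to identify $\CC\cap\CC_{1/2}$. Since $g>1$ we have $\CC=\Theta_1\subseteq\Theta_{g-1}=\Theta$, so $\CC\cap\CC_{1/2}\subseteq\Theta\cap\CC_{1/2}(K)\subseteq J[2]$; thus every point of $\CC\cap\CC_{1/2}$ is a $2$-torsion point of $J$ lying on $\CC$, and these are exactly $\infty$ and the $\W_\alpha$ ($\alpha\in\RR$). Conversely each of $\infty$ and the $\W_\alpha$ is a half of $0=\infty\in\CC$ and so lies in $\CC_{1/2}$, giving the claimed equality.

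I do not anticipate a real obstacle: the argument is just a translation of $2\a=P$ into a principal-divisor statement — using $\iota(P)=-P$ and $\infty\notin\supp(2D)$ — followed by a single appeal to Lemma~\ref{key}(1). The one point that must be checked with a little care is that a point of $\Theta_{g-1}$ admits a representative $D-m(\infty)$ with $m\le g-1$ and $\infty\notin\supp(D)$, for it is exactly the strict inequality $2m+1\le 2g-1<2g+1$ that produces the contradiction.
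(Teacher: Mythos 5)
Your proof is correct. The paper itself gives no proof of Theorem \ref{notheta} (it is quoted from Th.~2.5 of \cite{ZarhinHyper2}), but your reduction---writing $\a=\cl(D-m(\infty))$ with $m\le g-1$, observing that $2\a=P$ forces $\bigl(2D+(\iota(P))\bigr)-(2m+1)(\infty)$ to be principal, and invoking the odd-degree rigidity of Lemma \ref{key}(1)(i) to get the contradiction $2m+1=2g+1$---is precisely the argument of the cited source, and it is complete as written, including the identification of $\CC\cap\CC_{1/2}$ with $\{\infty\}\cup\{\W_{\alpha}\}_{\alpha\in\RR}$.
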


\section{Adding Weierstrass points}
\label{transW}
In this section we discuss how to compute a sum $\a+\W_{\beta}$ in $J(K)$ when $\a\in J(K)$  lies neither on $\Theta$ nor on its translation $\Theta+\W_{\beta}$.
  Let $D-g(\infty)$ be the reduced divisor on $\CC$, whose class represents $\a$. Here 
$$D=\sum_{j=1}^g(Q_j)\ \mathrm{ where }  \ Q_j=(a_j,b_j) \in \CC(K)\setminus\{\infty\}$$
is a degree $g$ effective divisor. Let
 $(U(x),V(x))$ be the Mumford representation  of  $\cl(D-g(\infty))$. We have
 $$\deg(U)=g>\deg(V)$$, 
$$U(x)=\prod_{j=1}^g(x-a_j), \  b_j=V(a_j) \  \forall j$$
and $f(x)-V(x)^2$ is divisible by $U(x)$.
\begin{ex}
\label{plusWi}
Assume additionally that {\sl none} of $Q_j$ coincides with   $\W_{\beta}=(\beta,0)$, i.e.,  
$$U(\beta)\ne 0.$$
  Let us find explicitly the Mumford representation $(U^{[\beta]}(x),V^{[\beta}](x))$ of the sum
$$\a+\W_{\beta}=\cl(D-m(\infty))+\cl((\W_{\beta})-(\infty))=\cl((D+(\W_{\beta}))-(g+1)(\infty))=\cl(D_1-(g+1)(\infty)).$$
where 
$$D_1:=D+(\W_{\beta})=\left(\sum_{j=1}^g(Q_j)\right)+(\W_{\beta})$$
is a degree $(g+1)$ effective divisor on $\mathcal{C}$. (We will see that $\deg(\tilde{U}^{[\beta]})=g$.) Clearly, $D_1-(g+1)(\infty)$ is {\sl semi-reduced} but {\sl not} reduced.

Let us consider
the polynomials
$$U_1(x)=(x-\beta)U(x), \ V_1(x)=V(x)-\frac{V(\beta)}{U(\beta)}U(x)\in K[x].$$
Then $U_1$ is a degree $(g+1)$ monic polynomial, $\deg(V_1)\le g$,
$$V_1(\beta)=0,  \ V_1(a_j)=V(a_j)=b_j \ \forall j$$
and $f(x)-V_1(x)^2$ is divisible by $U_1(x)$. (The last assertion follows from  the divisibility of both $f(x)$ and $V_1(x)$ by $x-\beta$ combined with the divisibility of $f(x)-V(x)^2$  by $U(x)$.)
If we put 
$$a_{g+1}=\beta, \ b_{g+1}=0, \ Q_{g+1}=\W_{\beta}=(\beta,0)$$ then $$U_1(x)=\prod_{j=1}^{g+1}(x-a_j), \
D_1=\sum_{j=1}^{g+1}(Q_j)\ \mathrm{ where }  \ Q_j=(a_j,b_j) \in \CC(K), \ b_j=V_1(a_j) \forall j$$
and $f(x)-V_1(x)^2$ is divisible by $U_1(x)$. In particular, $(U_1(x),V_1(x))$ is the   pair of polynomials  that corresponds to semi-reduced $D_1-(g+1)(\infty)$  as described in \cite[Prop. 13.4 and Th. 3.5]{Wash}. In order to find the Mumford representation of 
$\cl(D_1-(g+1)(\infty))$,
we use an algorithm described in \cite[Th. 13.9]{Wash}. Namely, let us put
$$\tilde{U}(x)=\frac{f(x)-V_1(x)^2}{U_1(x)}\in K[x].$$
Since $\deg(V_1(x)) \le g$ and $\deg(f)=2g+1$, we have
$$\deg\left(V_1(x)^2\right)\le 2g, \  \deg\left(f(x)-V_1(x)^2\right)=2g+1, \deg\left(\tilde{U}(x)\right)=g.$$
Since $f(x)$ is monic, $f(x)-V_1(x)^2$ is also monic and therefore $\tilde{U}(x)$ is also monic, because $U_1(x)$ is monic.
By \cite[Th. 13.9]{Wash}, $U^{[\beta]}(x)=\tilde{U}(x)$ (since the latter is monic and has degree $g\le g$) and  $V^{[\beta]}(x)$
 is the {\sl remainder} of $-V_1(x)$ with respect to division by $\tilde{U}(x)$. Let us find this remainder. We have
$$-V_1(x)=-\left(V(x)-\frac{V(\beta)}{U(\beta)}U(x)\right)=-V(x)+\frac{V(\beta)}{U(\beta)}U(x).$$
Recall that
$$\deg(V)<g=\deg(U)=\deg(\tilde{U}).$$
This implies that the coefficient of $-V_1$ at $x^g$ equals $V(\beta)/U(\beta)$ and therefore
$$V^{[\beta]}(x)=\left(-V(x)+\frac{V(\beta)}{U(\beta)}U(x)\right)-\frac{V(\beta)}{U(\beta)}\tilde{U}(x)=-V(x)+\frac{V(\beta)}{U(\beta)}\left(U(x)-\tilde{U}(x)\right).$$
Using formulas above for $U_1,V_1,\tilde{U}$, we obtain that
\begin{equation}
\label{UWi}
U^{[\beta]}(x)=\frac{f(x)-\left(V(x)-\frac{V(\beta)}{U(\beta)}U(x)\right)^2}{(x-\beta)U(x)},
\end{equation}
\begin{equation}
\label{VWi}
V^{[\beta]}(x)=-V(x)+\frac{V(\beta)}{U(\beta)}\left(U(x)-
\frac{f(x)-\left(V(x)-\frac{V(\beta)}{U(\beta)}U(x)\right)^2}{(x-\beta)U(x)}     \right).
\end{equation}
\end{ex}

\begin{rem}
There is an algorithm of David Cantor \cite[Sect. 13.3]{Wash}   that explains how to compute the Mumford representation of a sum of arbitrary divisor classes (elements of $J(K)$) when their Mumford representations are given.
\end{rem}

\begin{rem}
Suppose that $\a \in J(K)$ and $P=2\a$ lies in $\CC(K)$ but is not the zero of the group law.  Then $\a$ does not lie on the theta divisor (Theorem \ref{notheta}) and satisfies the conditions of Example \ref{plusWi} for all $\beta\in\RR$ (see Subsect. \ref{biject}).
\end{rem}

\section{Proof of Main Theorem}
\label{quadrics}
Let us choose an order on $\RR$. This allows us to identify $\RR$ with $\{1, \dots, 2g, 2g+1\}$ and list elements of $\RR$ as $\{\alpha_1, \dots , \alpha_{2g}, \alpha_{2g+1}\}$.  Then
$$f(x)=\prod_{i=1}^{2g+1}(x-\alpha_i)$$ and the affine equation  for $\CC\setminus \{\infty\}$ is
$$y^2=\prod_{i=1}^{2g+1}(x-\alpha_i).$$
Slightly abusing notation, we denote $\W_{\alpha_i}$ by $\W_i$. 

Let us consider the closed affine $K$-subset $\tilde{\CC}$ in the affine $K$-space $\mathbb{A}^{2g+1}$  with coordinate functions ${z_1, \dots, z_{2g}, z_{2g+1}}$ that  is cut out by the  system of quadratic equations
$$z_1^2+\alpha_1=z_2^2+\alpha_2= \dots =z_{2g+1}^2+\alpha_{2g+1}.$$
We write $x$ for the regular function $z_i^2+\alpha_i$ on $\tilde{\CC}$, which does {\sl not} depend on a choice of $i$.
By Hilbert's Nullstellensatz, the $K$-algebra  $K[\tilde{\CC}]$ of regular functions on $\tilde{\CC}$ is canonically isomorphic to the following $K$-algebra. First, we  need to consider the quotient $A$ of the polynomial $K[x]$-algebra $K[x][T_1, \dots , T_{2g+1}]$ by the ideal generated by all quadratic polynomials $T_i^2 -(x-\alpha_i)$. Next, $K[\tilde{\CC}]$ is canonically isomorphic to the quotient $A/\mathcal{N}(A)$ where $\mathcal{N}(A)$ is the nilradical of $A$.
In the next section (Example \ref{irredTildeC}) we will prove that $A$ has no zero divisors (in particular, $\mathcal{N}(A)=\{0\}$) and  therefore $\tilde{\CC}$ is {\sl irreducible}. (See also \cite{Flynn}.)
We write $y$ for the regular function
$$y=-\prod_{i=1}^{2g}z_i \in K[\tilde{\CC}].$$
Clearly, $y^2=\prod_{i=1}^{2g}(x-\alpha_i)$ in $K[\tilde{\CC}]$. The pair $(x,y)$ gives rise to the finite regular map of affine $K$-varieties (actually, curves)
\begin{equation}
\label{mapH}
\mathfrak{h}: \tilde{\CC}\to \CC\setminus \{\infty\}, \ (r_1, \dots, r_{2g}, r_{2g+1})\mapsto  (a,b)=\left(r_1^2+\alpha_1,  - \prod_{i=1}^{2g+1}r_i\right)
\end{equation}
of degree $2^{2g}$. For each 
$$P=(a,b) \in K^2=\mathbb{A}^2(K) \ \mathrm{ with } \ b^2=\prod_{i=1}^{2g+1}(a-\alpha_i)$$
the fiber $\mathfrak{h}^{-1}(P)=\RR_{1/2,P}$ consists of (familiar) collections of square roots
$$\rr=\{r_i=\sqrt{a-\alpha_i}\mid 1\le i \le 2g+1\}$$ with $\prod_{i=1}^{2g+1} r_i=-b$.
Each such $\rr$ gives rise to $\a_{\rr}\in J(K)$ such that
$$2\a_{\rr}=P\in \CC(K)\subset J(K)$$
(see \cite[Th. 3.2]{ZarhinHyper2}). On the other hand, for each $\W_l=(\alpha_l,0)$  (with $1\le l\le 2g+1$) the sum
$\a_{\rr}+\W_{l}$ is also a half of $P$ and therefore corresponds to a certain collection of square roots. Which one? The answer is given by Theorem \ref{signMain}. We repeat its statement, using the new notation.

\begin{thm}
\label{torseur}
Let $P=(a,b)$ be a $K$-point on $\CC$ and 
$\rr=(r_1, \dots, r_{2g}, r_{2g+1})$ be a collection of square roots $r_i=\sqrt{a-\alpha_i}\in K$ such that $\prod_{i=1}^{2g+1}r_i=-b$.
Let $l$ be an integer that satisfies $1\le l\le 2g+1$ and let
\begin{equation}
\label{RRl1}
\rr^{[l]}=\left(r_1^{[l]}, \dots, r_{2g}^{[l]}, r_{2g+1}^{[l]}\right)\in \mathfrak{h}^{-1}(P)\subset \tilde{\CC}(K)
\end{equation}
 be the collection of square roots $r_i^{[l]}=\sqrt{a-\alpha_i}$ such  that
 \begin{equation}
\label{RRl2}
r_l^{[l]}=r_l, \  r_i^{[l]}=-r_i \ \forall \ i \ne l.
\end{equation}
Then
$$\a_{\rr}+\W_l=\a_{\rr^{[l]}}.$$
\end{thm}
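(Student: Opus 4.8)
The plan is to compare the two halves $\a_\rr$ and $\a_{\rr^{[l]}}$ through their Mumford representations. By \cite[Th. 3.2]{ZarhinHyper2} (recalled in Subsect. \ref{biject} and \ref{halfRed}), $\a_\rr$ has Mumford representation $(U_\rr(x),V_\rr(x))$ with $\deg U_\rr = g > \deg V_\rr$ and, crucially, $U_\rr(\alpha_l)\ne 0$; hence $\a_\rr$ lies neither on $\Theta$ nor — since $\a_\rr+\W_l$ is again a half of $P$ and $P\ne 0$ (or $P=\W_l$, handled by Example \ref{specialB}) — on $\Theta+\W_l$. So the explicit formulas \eqref{UWi}--\eqref{VWi} of Example \ref{plusWi} compute the Mumford representation $(U^{[l]}_\rr(x),V^{[l]}_\rr(x))$ of $\a_\rr+\W_l$. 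On the other hand, $\rr^{[l]}\in\RR_{1/2,P}$ and $\a_{\rr^{[l]}}$ has Mumford representation $(U_{\rr^{[l]}}(x),V_{\rr^{[l]}}(x))$ built from the symmetric functions $\mathbf{s}_i(\rr^{[l]})$ of the sign-changed roots. Since Mumford representations are unique, it suffices to prove the two pairs of polynomials agree, i.e.
\begin{equation*}
U_{\rr^{[l]}}(x)=U^{[l]}_\rr(x),\qquad V_{\rr^{[l]}}(x)=V^{[l]}_\rr(x).
\end{equation*}

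\textbf{Key steps.} First I would record how the symmetric functions transform under $\rr\mapsto\rr^{[l]}$. Writing $r_i$ for $\rr(\alpha_i)$ and $\varepsilon_i=1$ for $i\ne l$, $\varepsilon_l=0$, the new roots are $(-1)^{\varepsilon_i-1}\,\text{(something)}$ — more precisely $r_i^{[l]}=-r_i$ for $i\ne l$ and $r_l^{[l]}=r_l$. It is cleanest to factor: set $w(x)=\prod_{i\ne l}(r_i+ s)$-type generating identities, or better, use the generating polynomial $\prod_i(t-r_i)$. Negating all but the $l$-th root sends $\prod_i(t-r_i)$ to $-\prod_i(-t-r_i)\cdot\frac{t-r_l}{-t-r_l}$ up to sign bookkeeping; I would instead argue directly on the pieces $U_\rr,V_\rr$. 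Recall $U_\rr(x)$ is, up to the sign $(-1)^g$, the even part (in the roots) of $\prod_i(r_i-T)$ evaluated appropriately, and $V_\rr(x)$ the odd part minus $\mathbf{s}_1$ times the even part, all written in the variable $a-x$. The second and cleaner route is the inversion formula of Remark \ref{UVtoRR}: $\a_{\rr^{[l]}}$ is \emph{determined} by $(U_{\rr^{[l]}},V_{\rr^{[l]}})$ via $\rr^{[l]}(\alpha)=\mathbf{s}_1(\rr^{[l]})+(-1)^g V_{\rr^{[l]}}(\alpha)/U_{\rr^{[l]}}(\alpha)$. So I would instead take the candidate pair $(U^{[l]}_\rr,V^{[l]}_\rr)$ coming from \eqref{UWi}--\eqref{VWi}, check it is a legitimate Mumford pair of degree $g$ (monic, $\deg V^{[l]}_\rr<g$, $U^{[l]}_\rr\mid f-(V^{[l]}_\rr)^2$ — all automatic from Example \ref{plusWi}), and then verify that the collection of square roots it produces via formulas \eqref{UVtoRR1}--\eqref{UVtoRR2} is exactly $\rr^{[l]}$. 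Since $\a\mapsto(\text{its half-data})$ and $\rr\mapsto\a_\rr$ are mutually inverse bijections (\cite[Th. 3.2]{ZarhinHyper2}), showing $(U^{[l]}_\rr,V^{[l]}_\rr)$ maps back to $\rr^{[l]}$ forces $\a_{\rr^{[l]}}=\a_\rr+\W_l$.

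\textbf{The computation.} Concretely, I must show: for every $\alpha\in\RR$,
\begin{equation*}
\mathbf{s}_1(\rr^{[l]})+(-1)^g\frac{V^{[l]}_\rr(\alpha)}{U^{[l]}_\rr(\alpha)}
=\begin{cases} r_l,& \alpha=\alpha_l,\\ -r_\alpha,& \alpha\ne\alpha_l.\end{cases}
\end{equation*}
I would substitute the definitions $V^{[l]}_\rr=-V_\rr+\frac{V_\rr(\alpha_l)}{U_\rr(\alpha_l)}(U_\rr-\tilde U)$ and $U^{[l]}_\rr=\tilde U=\bigl(f-(V_\rr-\tfrac{V_\rr(\alpha_l)}{U_\rr(\alpha_l)}U_\rr)^2\bigr)/\bigl((x-\alpha_l)U_\rr\bigr)$, evaluate at a root $\alpha\ne\alpha_l$ (where $U_\rr(\alpha)\ne 0$ and the factor $x-\alpha_l$ is nonzero), simplify $V^{[l]}_\rr(\alpha)/U^{[l]}_\rr(\alpha)$, and compare with $(-1)^g$ times the known value $\bigl(r_\alpha-\mathbf{s}_1(\rr)\bigr)$ from \eqref{UVtoRR1} for $\rr$ itself. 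The bridge is the elementary identity $\mathbf{s}_1(\rr^{[l]})=2r_l-\mathbf{s}_1(\rr)$ (negating all roots but the $l$-th), which is exactly the arithmetic that makes the sign flip on $\alpha\ne\alpha_l$ come out right while fixing $\alpha=\alpha_l$; for $\alpha=\alpha_l$ one uses instead that $V^{[l]}_\rr(\alpha_l)/U^{[l]}_\rr(\alpha_l)$ is the value forced at the Weierstrass point by the $(x-\alpha_l)$ in the denominator of $\tilde U$, and L'H\^opital/leading-term analysis recovers $r_l$. The main obstacle is purely this bookkeeping: carefully tracking the factor $(x-\alpha_l)$ and $f(\alpha_l)=0$ through the quotient defining $\tilde U$ so that the evaluations at $\alpha=\alpha_l$ and at $\alpha\ne\alpha_l$ are both legitimate (no division by zero), and matching the sign $(-1)^g$ and the $\mathbf{s}_1$ shift without error. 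The auxiliary commutative-algebra facts promised for Section \ref{useful} (irreducibility of $A$, hence of $\tilde\CC$) are what guarantee $\mathfrak h$ genuinely realizes $\RR_{1/2,P}$ as a single $J[2]$-torsor fiber, so that uniqueness of the Mumford pair is enough to conclude.
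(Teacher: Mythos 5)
Your proposal is correct in outline, but it takes a genuinely different route from the paper. The paper does \emph{not} carry out the computation you describe: instead it defines two involutions of the affine curve $\tilde{\CC}$ --- the sign-flip $\mathfrak{s}^{[l]}$ and the map $\mathfrak{t}^{[l]}$ sending $\rr$ to the half corresponding to $\a_{\rr}+\W_l$ (built as a composition of the regular maps \eqref{UVtoRR1}--\eqref{UVtoRR2} and \eqref{UWi}--\eqref{VWi}) --- observes that both preserve every fiber of $\mathfrak{h}$ and agree on the single fiber $\mathfrak{h}^{-1}(\W_l)$ (Example \ref{exW}), and then uses the irreducibility of $\tilde{\CC}$ (Section \ref{useful}) to force $\left(\mathfrak{s}^{[l]}\right)^{-1}\mathfrak{t}^{[l]}$ to be one of the finitely many sign automorphisms $T_{\varepsilon}$, hence the identity. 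Your route instead verifies directly that the Mumford pair $(U^{[\beta]},V^{[\beta]})$ of $\a_{\rr}+\W_l$ inverts, via \eqref{UVtoRR1}, to the sign-flipped collection; I checked that this closes: writing $c=V_{\rr}(\alpha_l)/U_{\rr}(\alpha_l)=(-1)^g(r_l-\mathbf{s}_1(\rr))$ and using $f(\alpha)=0$ one gets $U^{[\beta]}(\alpha)=\frac{(r_\alpha-r_l)}{(r_\alpha+r_l)}U_{\rr}(\alpha)$ for $\alpha\ne\alpha_l$, whence $V^{[\beta]}(\alpha)/U^{[\beta]}(\alpha)=(-1)^g\bigl(-r_\alpha-(2r_l-\mathbf{s}_1(\rr))\bigr)$, matching your identity $\mathbf{s}_1(\rr^{[l]})=2r_l-\mathbf{s}_1(\rr)$; at $\alpha=\alpha_l$ one gets $V^{[\beta]}(\alpha_l)=-c\,U^{[\beta]}(\alpha_l)$ with $U^{[\beta]}(\alpha_l)=f'(\alpha_l)/U_{\rr}(\alpha_l)\ne 0$, so no genuine L'H\^opital issue arises. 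One small cleanup you should make explicit: rather than chasing $\mathbf{s}_1$ through \eqref{UVtoRR2}, note that the true half $\rr'$ with $\a_{\rr'}=\a_{\rr}+\W_l$ satisfies $\rr'(\alpha)-\rr^{[l]}(\alpha)=\mathbf{s}_1(\rr')-\mathbf{s}_1(\rr^{[l]})$ for all $\alpha$, a constant; since both are square roots of the distinct quantities $a-\alpha$, this constant must vanish. The trade-off: your argument makes Section \ref{useful} and the irreducibility of $\tilde{\CC}$ entirely unnecessary (your closing sentence overstates their role --- the fiber description of $\mathfrak{h}$ is elementary), at the price of a page of sign bookkeeping; the paper's rigidity argument avoids all computation but needs the commutative-algebra input and the regularity of $\mathfrak{t}^{[l]}$.
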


\begin{ex}
\label{exW}
Let us take as $P$ the point $\W_l=(\alpha_l,0)$. Then 
$$r_l=\sqrt{\alpha_l-\alpha_l}=0 \ \forall \
\rr=(r_1, \dots, r_{2g}, r_{2g+1}) \in \mathfrak{h}^{-1}(\W_l)$$
and therefore
$$\rr^{[l]}=(-r_1, \dots, -r_{2g}, -r_{2g+1}) =-\rr.$$
It follows from Example \ref{specialB} (if we take $\beta=\alpha_l$) that 
$$ \a_{\rr}+\W_l=\a_{\rr}-\W_{l}=\a_{\rr}-2\a_{\rr}=-\a_{\rr}=\a_{\rr^{[l]}}.$$
This proves Theorem \ref{torseur}  in the case of $P=\W_l$. We are going to deduce the general case from this special one.
\end{ex}

\begin{sect}
\label{Teps}
Before starting the proof of Theorem \ref{torseur}, let us define for each collections of signs
$$\varepsilon =\{\epsilon_i=\pm 1\mid 1\le i \le 2g+1, \prod_{i=1}^{2g+1} \epsilon_i=1\}$$
the biregular automorphism
$$T_{\varepsilon}: \tilde{\CC} \to \tilde{\CC}, \ z_i \mapsto \epsilon_i z_i \ \forall i.$$
Clearly, all $T_{\varepsilon}$ constitute a finite automorphism  group of 
$\tilde{\CC}$ that leaves invariant  every $K$-fiber of $\mathfrak{h}: \tilde{\CC}\to \CC\setminus \{\infty\}$, acting on it {\bf transitively}.
Notice that if $T_{\varepsilon}$ leaves invariant all the points of a certain fiber $\mathfrak{h}^{-1}(P)$ with $P\in \CC(K)$ then all the $\epsilon_i=1$, i.e., $T_{\varepsilon}$ is the identity map.
\end{sect}

\begin{proof}[Proof of Theorem \ref{torseur}]
Let us put
$$\beta:=\alpha_l.$$
Then we have
$$\W_l=(\alpha_l,0)=(\beta,0).$$
Let us consider the automorphism (involution)
$$\mathfrak{s}^{[l]}: \tilde{\CC}\to  \tilde{\CC},  \ \rr\mapsto \rr^{[l]}$$   
of $\tilde{\CC}$ defined by \eqref{RRl1} and \eqref{RRl2}.
We need to define another (actually, it will turn out to be the same) involution (and therefore an automorphism)
$$\mathfrak{t}^{[l]}: \tilde{\CC}\to  \tilde{\CC}$$
that is defined  by 
$$\a_{\mathfrak{t}^{[l]}(\rr)} = \a_{\rr}+\W_l$$
 as  a composition of the following {\bf regular} maps. First,  $\rr\in \tilde{C}(K)$ goes to the pair of polynomials $(U_{\rr}(x),V_{\rr}(x))$  as in Remark \ref{UVtoRR}, which is the Mumford representation of $\a_{\rr}$ (see Subsect. \ref{halfRed}). Second, 
$(U_{\rr}(x),V_{\rr}(x))$ goes to the pair of polynomials $(U^{[\beta]}(x),V^{[\beta]}(x))$ defined by formulas \eqref{UWi} and \eqref{UWi} in Section \ref{transW}, which is the Mumford representation of $\a_{\rr}+\W_l$. 
Third, applying formulas \eqref{UVtoRR1}  and \eqref{UVtoRR2} in Remark \ref{UVtoRR}  to $(U^{[\beta]}(x),V^{[\beta}](x))$ (instead of $(U(x),V(x))$), we get at last $\mathfrak{t}^{[l]}(\rr) \in \tilde{\CC}(K)$ such that
$$\a_{\mathfrak{t}^{[l]}(\rr) }=\a_{\rr}+\W_l.$$
Clearly, $\mathfrak{t}^{[l]}$ is a regular selfmap of $\tilde{\CC}$  that is an involution, which implies that $\mathfrak{t}^{[l]}$ is a biregular automorphism of $\tilde{\CC}$. 
It is also clear that both $\mathfrak{s}^{[l]}$ and  $\mathfrak{t}^{[l]}$ leave invariant every fiber of $\mathfrak{h}: \tilde{\CC}\to \CC\setminus \{\infty\}$ 
and coincide on  $\mathfrak{h}^{-1}(\W_l)$, thanks to Example \ref{exW}. This implies that
$\mathfrak{u}:=\left(\mathfrak{s}^{[l]}\right)^{-1}\mathfrak{t}^{[l]}$ is a biregular automorphism of $\tilde{\CC}$ that leaves invariant every fiber of 
$\mathfrak{h}: \tilde{\CC}\to \CC\setminus \{\infty\}$ and acts as the identity map on $\mathfrak{h}^{-1}(\W_l)$.  
The invariance of each fiber of $\mathfrak{h}$ implies that $\tilde{\CC}(K)$ coincides with the finite union of its closed subsets $\tilde{\CC}_{\varepsilon}$ defined by the condition 
 $$\tilde{\CC}_{\varepsilon}:=\{Q \in \tilde{\CC}(K)\mid \mathfrak{u}(Q)=T_{\varepsilon}(Q)\}.$$
 Since $\tilde{\CC}$ is irreducible, the whole $\tilde{\CC}(K)$ coincides with one of $\tilde{\CC}_{\varepsilon}$. In particular, the fiber 
$$\mathfrak{h}^{-1}(\W_l)\subset \tilde{\CC}_{\varepsilon}$$
and therefore $T_{\varepsilon}$ acts identically on all points of $\mathfrak{h}^{-1}(\W_l)$. In light of arguments of Subsect. \ref{Teps}, $T_{\varepsilon}$ is the {\sl identity map} and therefore $\mathfrak{u}$ acts identically on the whole $\tilde{\CC}(K)$. This means that
 $\mathfrak{s}^{[l]}=\mathfrak{t}^{[l]}$, i.e.,
$$\a_{\rr}+\W_l=\a_{\rr^{[l]}}.$$
\end{proof}

\begin{sect}
\label{pointOrder2T}
Let $\phi: \RR \to \F_2$ be a function that satisfies $\sum_{\alpha\in\RR}\phi(\alpha)=0$, i.e. $\phi\in (\F_2^{\RR})^0$. Then the finite subset
$$\supp(\phi)=\{\alpha\in \RR\mid \phi(\alpha)\ne 0\}\subset\RR$$
has {\sl even} cardinality and the corresponding point of $J[2]$ is
$$\mathfrak{T}_{\phi}=\sum_{\alpha\in \RR}\phi(\alpha)\W_{\alpha}=
\sum_{\alpha\in \supp(\phi)}\W_{\alpha}=\sum_{\gamma\not\in \supp(\phi)}\W_{\gamma}.$$
\end{sect}

\begin{thm}
\label{add2}
Let $\rr\in \RR_{1/2,P}$. Let us define $\rr^{(\phi)} \in \RR_{1/2,P}$ as follows.
$$\rr^{(\phi)}(\alpha)=-\rr(\alpha) \ \forall \alpha \in \supp(\phi); \
\rr^{(\phi)}(\gamma)=\rr(\gamma) \ \forall \gamma \not\in \supp(\phi).$$
Then
$$\a_{\rr}+\mathfrak{T}_{\phi}=\a_{\rr^{(\phi)}}.$$
\end{thm}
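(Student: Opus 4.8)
The plan is to derive Theorem~\ref{add2} from the single-sign case Theorem~\ref{torseur} by a straightforward induction on the cardinality of $\supp(\phi)$. Recall from Subsect.~\ref{pointOrder2T} that $\supp(\phi)$ has even cardinality, say $\#\supp(\phi)=2t$; I will induct on $t$. The base case $t=0$ is trivial: then $\phi=0$, so $\mathfrak{T}_\phi=0$ and $\rr^{(\phi)}=\rr$, and the identity $\a_{\rr}=\a_{\rr}$ holds tautologically.

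For the inductive step, the key observation is that flipping an \emph{even} set of signs can be realized as the composition of flipping $2g$ of the coordinates (the complement of a single index) with flipping a different set. Concretely, pick any $\beta\in\supp(\phi)$ and note that $\psi_\beta$ (the function sending $\beta$ to $0$ and all other elements of $\RR$ to $1$) corresponds to $\W_\beta$ under the identification $J[2]\cong(\F_2^{\RR})^0$; this is exactly the function appearing in the displayed formula $\W_\beta=\sum_{\alpha\ne\beta}\W_\alpha$ in the introduction. Set $\phi':=\phi+\psi_\beta\in(\F_2^{\RR})^0$. Then $\supp(\phi')=\RR\setminus\bigl(\supp(\phi)\,\triangle\,\{\beta\}\bigr)$ has cardinality $(2g+1)-(2t-1)=2g+2-2t$, which is even; but this need not be smaller than $2t$, so one must instead arrange the bookkeeping so that the \emph{symmetric difference} being peeled off shrinks. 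The clean way: pick $\beta\in\supp(\phi)$, let $\phi_1$ be the indicator of $\{\beta\}$ together with one other element $\gamma\in\supp(\phi)$, $\gamma\ne\beta$ — wait, a cardinality-$2$ support is not of the form $\psi_\beta$ in general. So the correct elementary building block is not $\psi_\beta$ but rather the transposition-type function whose support is a $2$-element subset $\{\beta,\gamma\}$ of $\RR$, and one must express \emph{that} in terms of the $\W$-basis.

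Here is the cleaner route I would actually take. Theorem~\ref{torseur} handles the function $\psi_l$ with $\supp(\psi_l)=\RR\setminus\{\alpha_l\}$, since $\W_l$ corresponds to $\psi_l$ and $\rr^{[l]}$ is exactly $\rr^{(\psi_l)}$. Now the functions $\psi_l$, $l=1,\dots,2g+1$, generate $(\F_2^{\RR})^0$, so every $\phi\in(\F_2^{\RR})^0$ can be written as $\phi=\psi_{l_1}+\psi_{l_2}+\cdots+\psi_{l_k}$ for some indices $l_1,\dots,l_k$. Because both the map $\rr\mapsto\a_{\rr}$ and the action of $(\F_2^{\RR})^0$ on $\RR_{1/2,P}$ are being compared, and because the sign-flip action is visibly a group action (composing two sign-flips adds the corresponding $\F_2$-functions), and $\mathfrak{T}_{\phi_1+\phi_2}=\mathfrak{T}_{\phi_1}+\mathfrak{T}_{\phi_2}$ in $J[2]$, I can apply Theorem~\ref{torseur} $k$ times in succession:
\[
\a_{\rr}+\mathfrak{T}_{\phi}=\a_{\rr}+\W_{l_1}+\cdots+\W_{l_k}
=\a_{(\cdots((\rr^{[l_1]})^{[l_2]})\cdots)^{[l_k]}}
=\a_{\rr^{(\psi_{l_1})\cdots(\psi_{l_k})}}=\a_{\rr^{(\phi)}},
\]
where the last equality holds because iterating the coordinate sign-flips $\rr\mapsto\rr^{[l_i]}$ multiplies the $\alpha$-th coordinate by $\prod_i\chi(\psi_{l_i}(\alpha))=\chi(\phi(\alpha))$, which by definition of $\rr^{(\phi)}$ is exactly $\rr^{(\phi)}$. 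Thus the two $(\F_2^{\RR})^0$-torsor structures on $\RR_{1/2,P}$ agree on generators of the group, hence everywhere.

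The main thing to get right — the only genuine obstacle — is the compatibility bookkeeping: one must check that the intermediate elements $\rr^{[l_1]},(\rr^{[l_1]})^{[l_2]},\dots$ indeed lie in $\RR_{1/2,P}$ (immediate, since $\RR_{1/2,P}$ is a $(\F_2^{\RR})^0$-torsor and each operation is a torsor translation), and that applying Theorem~\ref{torseur} with the \emph{same} point $P$ at each stage is legitimate — it is, because $\a_{\rr}+\W_{l_1}$ is again a half of $P$, so its preimage under $\rr\mapsto\a_{\rr}$ is governed by the same theorem with the same $P$. Once this is observed, the proof is a one-line induction, and no new geometry or commutative algebra beyond Theorem~\ref{torseur} is needed. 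I would write it as: reduce to the case $\#\supp(\phi)$ arbitrary even, write $\phi$ as a sum of the $\psi_l$'s, and iterate.
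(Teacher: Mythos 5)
Your proposal is correct and follows essentially the same route as the paper: both reduce Theorem~\ref{add2} to Theorem~\ref{torseur} by decomposing $\mathfrak{T}_{\phi}$ as a sum of Weierstrass points $\W_{l}$ (equivalently, writing $\phi$ as a sum of the $\psi_{l}$) and iterating; the paper uses the explicit decomposition over $\supp(\phi)$ with a hands-on parity count of sign changes, while you phrase the same bookkeeping via the multiplicativity of $\chi$. The meandering false starts at the beginning could simply be deleted.
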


\begin{rem}
If $\phi$ is identically zero then 
$$\mathfrak{T}_{\phi}=0\in J[2], \ \rr^{(\phi)}=\rr$$
and the assertion of Theorem \ref{add2} is obviously true.
 If $\alpha_l\in \RR$ and $\phi=\psi_{\alpha_l}$, i.e. $\supp(\phi)=\RR\setminus \{\alpha_l\}$ then
$$\mathfrak{T}_{\phi}=\W_l\in J[2], \ \rr^{(\phi)}=\rr^{[l]}$$
and the assertion of Theorem \ref{add2} follows from Theorem \ref{torseur}.
\end{rem}

\begin{proof}[Proof of Theorem \ref{add2}]
We may assume that $\phi$ is {\sl not} identically zero. We need to apply Theorem \ref{torseur} $d$ times where $d$ is the (even) cardinality of $\supp(\phi)$ in order to get $\rr^{\prime}\in\RR_{1/2,P}$ such that
$$\a_{\rr}+\sum_{\alpha\in \supp(\phi)}\W_{\alpha}=\a_{\rr^{\prime}}.$$
Let us check how many times do we need to change the sign of each $\rr(\beta)$.
First, if $\beta\not \in \supp(\phi)$ then we need to change to sign of $\rr(\beta)$ at every step, i.e., we do it exactly $d$ times. Since $d$ is even, the sign of $\rr(\beta)$ remains the same, i.e.,
$$\rr^{\prime}(\beta)=\rr(\beta) \ \forall \beta \not\in  \supp(\phi).$$
Now if $\beta \in \supp(\phi)$ then we need to change the sign of $\rr(\beta)$ every time when we add $W_{\alpha}$ with $\alpha \ne \beta$ and it occurs exactly $(d-1)$ times. On the other hand, when we add $\W_{\beta}$, we don't change the sign of $\rr(\beta)$. So, we change the sign of $\rr(\beta)$ exactly $(d-1)$ times, which implies that
$$\rr^{\prime}(\beta)=-\rr(\beta) \ \forall \beta \in  \supp(\phi).$$
Combining the last two displayed formula, we obtained that
$$\rr^{\prime}=\rr^{(\phi)}.$$
\end{proof}

\section{Useful Lemma}
\label{useful}
As usual, we define the Kronecker delta $\delta_{ik}$ as $1$ if $i=k$ and $0$ if $i\ne k$.

The following result is probably well known but I did not find a suitable reference. (However, see \cite[Lemma 5.10]{Flynn} and \cite[pp. 425--427]{BGG}.)

\begin{lem}
\label{fieldExt}
Let $n$ be a positive integer, $E$ a field provided with $n$ distinct discrete valuation maps
$$\nu_i: E^{*} \twoheadrightarrow \Z, \ (i=1, \dots, n).$$
For each $i$ let
 $O_{\nu_i} \subset E$  the discrete valuation ring attached to $\nu_i$
and  $\pi_i \in O_{\nu_i} $  its uniformizer, i.e., a generator of the maximal ideal in $O_{\nu_i}$.
Suppose that for each $i$ we are given a  prime number $p_i$  such that the characteristic of the residue field $O_{\nu_i} /\pi_i$ is different from $p_k$ for all $k \ne i$.
Let us assume also that
$$\nu_i(\pi_k)=\delta_{ik} \ \forall i,k=1, \dots n,$$
i.e, each $\pi_i$ is a $\nu_k$-adic unit if $i\ne k$. 

Then the  the quotient $B=E[T_1, \dots , T_n]/(T_1^{p_1}-\pi_1, \dots , T_n^{p_n}-\pi_n)$ of the polynomial $E$-algebra  $E[T_1, \dots T_n]$ by the ideal generated by all $T_i^{p_i}-\pi_i$ is a field that is an algebraic extension of $E$ of degree $\prod_{i=1}^n p_i$. 
In addition,  the set of monomials
$$S=\{\prod_{i=1}^n T_i^{e_i}\mid 0 \le e_i \le p_i-1\}\subset E[T_1, \dots T_n]$$
maps injectively into $B$ and its image is a basis of the $E$-vector space $B$.
\end{lem}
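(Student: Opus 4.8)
The plan is to prove Lemma \ref{fieldExt} by induction on $n$, adjoining one radical $T_i$ at a time and showing at each stage that the polynomial $T_i^{p_i} - \pi_i$ stays irreducible over the field built so far. The key tool is the standard irreducibility criterion for binomials $X^p - a$ over a field: for a prime $p$, $X^p - a$ is irreducible over a field $F$ if and only if $a$ is not a $p$-th power in $F$ (in characteristic $p$ one must also rule out $a \in F^p$, but here $p_i \ne \fchar$ of the relevant residue fields, and one works in characteristic zero or coprime characteristic so the separability issues are mild; in fact the cleanest route avoids this subtlety entirely by using valuations). Concretely, to adjoin $T_1$ with $T_1^{p_1} = \pi_1$: since $\nu_1(\pi_1) = 1$ is not divisible by $p_1$, the element $\pi_1$ cannot be a $p_1$-th power in $E$, so $E_1 := E[T_1]/(T_1^{p_1}-\pi_1)$ is a field of degree $p_1$ over $E$.

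The heart of the induction is to extend the valuations. First I would record that $\nu_1$ extends to a discrete valuation $\tilde\nu_1$ on $E_1$ with $\tilde\nu_1(T_1) = 1$ and $\tilde\nu_1|_E = p_1 \cdot \nu_1$ (totally ramified extension; value group $\Z$ after rescaling, residue field unchanged, so its characteristic is still $\ne p_k$ for $k \ne 1$). Second, and this is the point that requires care, each of the other valuations $\nu_k$ ($k \ge 2$) extends to $E_1$: because $\nu_k(\pi_1) = 0$, i.e. $\pi_1$ is a $\nu_k$-adic unit, and because the residue characteristic at $\nu_k$ is different from $p_1$, the extension $E_1/E$ is unramified at $\nu_k$ — one can pick a valuation $\tilde\nu_k$ on $E_1$ above $\nu_k$ with $\tilde\nu_k|_{E} = \nu_k$ and uniformizer still $\pi_k$, and with $\tilde\nu_k(T_1) = 0$ (since $T_1^{p_1} = \pi_1$ is a unit, $T_1$ is a unit). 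The crucial inductive invariant to maintain is precisely the hypothesis set: after adjoining $T_1, \dots, T_j$ we have a field $E_j$ with valuations $\tilde\nu_1, \dots, \tilde\nu_n$, where $\tilde\nu_i(T_i) > 0$ and $\tilde\nu_i(\pi_i) > 0$ for $i \le j$, while for $i > j$ the element $\pi_i$ still has $\tilde\nu_i(\pi_i) = 1$ and is a $\tilde\nu_k$-adic unit for $k \ne i$, and the residue characteristic at $\tilde\nu_i$ is still $\ne p_k$ for $k \ne i$. Granting this, $\pi_{j+1}$ has $\tilde\nu_{j+1}$-valuation $1$, hence is not a $p_{j+1}$-th power in $E_j$, so $T_{j+1}^{p_{j+1}} - \pi_{j+1}$ is irreducible over $E_j$ and $E_{j+1} := E_j[T_{j+1}]/(T_{j+1}^{p_{j+1}} - \pi_{j+1})$ is a field of degree $p_{j+1}$ over $E_j$; one then re-extends all the valuations as in the base case to close the induction.

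After the induction, $B = E_n$ is a field with $[B:E] = \prod_{i=1}^n p_i$. For the statement about the monomial basis $S$: the tower of field extensions gives $[B:E] = \prod_i p_i$, and at each stage $E_{j+1} = E_j[T_{j+1}]$ has the power basis $1, T_{j+1}, \dots, T_{j+1}^{p_{j+1}-1}$ over $E_j$; multiplying these power bases up the tower, the products $\prod_i T_i^{e_i}$ with $0 \le e_i \le p_i - 1$ form an $E$-basis of $B$, which is exactly the asserted image of $S$. In particular $S$ maps injectively into $B$ because it has $\prod_i p_i = [B:E]$ elements and they are $E$-linearly independent. It remains only to observe that the algebra $B$ defined as the quotient of the polynomial ring coincides with the iterated extension $E_n$, which is immediate since passing from $E[T_1,\dots,T_n]/(\dots)$ to $E_j[T_{j+1},\dots,T_n]/(\dots)$ is just grouping the relations.

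I expect the main obstacle to be the bookkeeping in the inductive step — specifically, verifying cleanly that adjoining $T_{j+1}$ does not spoil, for the \emph{other} indices $i \ne j+1$, either the normalization $\tilde\nu_i(\pi_i) = 1$ or the residue-characteristic condition. The residue-characteristic hypothesis is exactly what forces each $E_{j+1}/E_j$ to be unramified (hence residue-field-extending in a controlled way, with residue characteristic unchanged) at every $\tilde\nu_i$ with $i \ne j+1$, and the coprimality built into the hypotheses ($\fchar(O_{\nu_i}/\pi_i) \ne p_k$ for $k \ne i$) is designed precisely so that this goes through for all pairs simultaneously. One clean way to organize this is to choose, at each stage, the extension $\tilde\nu_i$ of $\nu_i$ with uniformizer $\pi_i$ and note $T_{j+1}$ is an $\tilde\nu_i$-unit (as $T_{j+1}^{p_{j+1}} = \pi_{j+1}$ is an $\tilde\nu_i$-unit for $i \ne j+1$), so $E_{j+1} = E_j[T_{j+1}]$ lies in the completion's unramified closure at $\tilde\nu_i$; alternatively one can avoid completions and argue directly with Hensel-type / Eisenstein-type statements at the level of the valuation ring $O_{\tilde\nu_i}$.
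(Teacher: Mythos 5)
Your proposal is correct and follows essentially the same route as the paper: induction on $n$, adjoining one radical at a time, proving irreducibility of $T_i^{p_i}-\pi_i$ from $\nu_i(\pi_i)=1$ (the paper invokes Eisenstein where you invoke the prime-exponent binomial criterion, which is equivalent here), and using the residue-characteristic hypothesis to see that the extension is unramified at the remaining valuations so that each $\pi_k$ stays a uniformizer and the inductive hypotheses persist. The basis statement is then read off from the tower exactly as in the paper.
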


\begin{rem}
By definition of a uniformizer, $\nu_i(\pi_i)=1$ for all $i$.
\end{rem}

\begin{proof}[Proof of Lemma \ref {fieldExt}]
First, the cardinality of $S$ is $\prod_{i=1}^n p_i$ and the image of $S$ generates $B$ as the $E$-vector space. This implies that if the $E$-dimension of $B$ is $\prod_{i=1}^n p_i$  then the image of $S$ is a basis of the $E$-vector space $B$.
Second,  notice that for each $i$ the polynomial $T^{p_i}-\pi_i$ is  irreducible over $E$, thanks to the Eisenstein criterion applied to  $\nu_i$ and therefore $E[T_i]/(T^{p_i}-\pi_i)$ is a {\sl field} that is an algebraic degree $p_i$  extension of $E$. In particular, the $E$-dimension of $E[T_i]/(T^{p_i}-\pi_i)$ is $p_i$.
 This proves Lemma for $n=1$.

{\bf Induction by $n$}.  Suppose that $n>1$ and consider the finite  degree $p_i$ field extension $E_n=E[T_n]/(T^{p_n}-\pi_n)$ of $E$.

Clearly, the $E$-algebra $B$ is isomorphic to the quotient $E_n[T_1, \dots T_{n-1}]/(T_1^{p_1}-\pi_1, \dots , T_{n-1}^{p_{n-1}}-\pi_{n-1})$ of the polynomial ring $E_n[T_1, \dots T_{n-1}]$ by the ideal generated by all polynomials  $T_i^{p_i}-\pi_i$ with $i<n$.  Our goal is to apply the induction assumption to $E_n$ instead of $E$. In order to do that, let us consider for each $i <n$ the integral closure $\tilde{O}_i$ of $O_{\nu_i}$ in $E_n$. It is well known that $\tilde{O}_i$ is a Dedekind ring. Our conditions imply that $E_n/E$ is {\sl unramified} at all $\nu_i$ for all $i<n$. This means that if $\mathcal{P}_i$ is a maximal ideal of $\tilde{O}_i$ that contains $\pi_i \tilde{O}_i$ (such an ideal always exists) and 
$$\mathrm{ord}_{\mathcal{P}_i}:E_n^{*}\twoheadrightarrow \Z$$
is the discrete valuation map attached to $\mathcal{P}_i$ then the restriction of
$\mathrm{ord}_{\mathcal{P}_i}$ to $E^{*}$ coincides with $\nu_i$. This implies that for all positive integers $i,k \le n-1$
$$\mathrm{ord}_{\mathcal{P}_i}(\pi_k)=\nu_i(\pi_k)=\delta_{ik}.$$
In particular,
$$\mathrm{ord}_{\mathcal{P}_i}(\pi_i)=\nu_i(\pi_i)=1,$$
i.e, $\pi_i$ is a uniformizer in the corresponding discrete valuation (sub)ring 
$O_{\mathrm{ord}_{\mathcal{P}_i}}$ 
of $E_n$ attached to $\mathrm{ord}_{\mathcal{P}_i}$. Now  the induction assumption applied to $E_n$ and its $(n-1)$ discrete valuation maps $\mathrm{ord}_{\mathcal{P}_i}$ ($1\le i \le n-1$) implies that $B/E_n$ is a field extension of degree $\prod_{i=1}^{n-1} p_i$.  This implies that the degree
$$[B:E]=[B:E_n][E_n:E]=\left(\prod_{i=1}^{n-1} p_i\right)p_n=\prod_{i=1}^{n} p_i.$$
This means that the $E$-dimension of $B$ is $\prod_{i=1}^{n} p_i$ and therefore the image of $S$ is a basis of the $E$-vector space $B$.
\end{proof}

\begin{cor}
\label{irredExt}
We keep the notation and assumptions of Lemma \ref{fieldExt}.
Let $R$ be a subring of $E$ that contains $1$ and all $\pi_i$ ($1 \le i \le n$).
Then the quotient  $B_R=R[T_1, \dots , T_n]/(T_1^{p_1}-\pi_1, \dots , T_n^{p_n}-\pi_n)$ of the polynomial $R$-algebra  $R[T_1, \dots , T_n]$ by the ideal generated by all $T_i^{p_i}-\pi_i$ has no zero divisors.
\end{cor}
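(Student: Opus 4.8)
The strategy is to realize $B_R$ as a subring of the field $B$ produced by Lemma \ref{fieldExt}; since a subring of a field has no zero divisors, this will suffice. Because $1,\pi_1,\dots,\pi_n\in R$, the assignment $T_i\mapsto T_i$ defines an $R$-algebra homomorphism $\beta: B_R\to B$ induced by the inclusion $R\hookrightarrow E$; concretely $\beta$ is a base-change map, and since forming a quotient commutes with extension of scalars one has $B\cong B_R\otimes_R E$.

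First I would observe that the image of the monomial set $S=\{\prod_{i=1}^n T_i^{e_i}\mid 0\le e_i\le p_i-1\}$ generates $B_R$ as an $R$-module: every monomial in $R[T_1,\dots,T_n]$ can be rewritten modulo the defining ideal as an $R$-linear combination of elements of $S$, by repeatedly applying $T_i^{p_i}\equiv\pi_i$ together with $\pi_i\in R$. Thus there is a surjection of $R$-modules $\sigma: R^{(S)}\twoheadrightarrow B_R$ sending the standard basis vector indexed by $s\in S$ to the class of $s$. By Lemma \ref{fieldExt}, the classes of the elements of $S$ form an $E$-basis of $B$; in particular they are $R$-linearly independent in $B$, so the composite $\beta\circ\sigma: R^{(S)}\to B$ is injective. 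Hence $\sigma$ is injective, therefore an isomorphism — so $B_R$ is $R$-free on the image of $S$ — and, looking at the factorization once more, $\beta$ itself is injective.

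Finally, by Lemma \ref{fieldExt} the ring $B$ is a field, hence an integral domain; since $\beta$ identifies $B_R$ with a subring of $B$, the ring $B_R$ has no zero divisors. I expect no genuine obstacle here: the argument rests entirely on Lemma \ref{fieldExt}, and the only point deserving a moment's care is that $R$ is permitted to be an arbitrary — possibly non-Noetherian, possibly non-reduced — subring of $E$. This causes no trouble, since the argument uses only that $R\hookrightarrow E$ is injective and that each $\pi_i$ already lies in $R$, and invokes no finiteness or flatness hypothesis on $R$.
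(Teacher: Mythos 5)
Your proposal is correct and follows essentially the same route as the paper: both arguments show that the image of $S$ generates $B_R$ as an $R$-module, use the fact from Lemma \ref{fieldExt} that the image of $S$ is an $E$-basis of $B$ (hence $R$-linearly independent) to conclude that the natural map $B_R\to B$ is injective, and then invoke that $B$ is a field. The only cosmetic difference is that you package the argument via the surjection $R^{(S)}\twoheadrightarrow B_R$ and record the extra observation that $B_R$ is $R$-free on $S$, which the paper does not state explicitly.
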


\begin{proof}
There are the natural homomorphisms of $R$-algebras
$$R[T_1, \dots T_n]\twoheadrightarrow  B_R\to B$$
such that the first homomorphism is surjective and the {\sl injective} image of 
$$S\subset R[T_1, \dots T_n]\subset E[T_1, \dots T_n]$$ 
in $B$ is a basis of the $E$-vector space $B$. On the other hand, the image of $S$ generates $B_R$ as $R$-module. It suffices to prove that $B_R \to B$ is injective, since $B$ is a field by Lemma \ref{fieldExt}.

 Suppose that $u \in B_R$ goes to $0$ in $B$. Clearly, $u$ is a linear combination of (the images of) elements of $S$ with coefficients in $R$. Since the image of $u$ in $B$ is $0$, all these coefficients are zeros, i.e., $u=0$ in $B_R$. 
\end{proof}

\begin{ex}
\label{irredTildeC}
We use the notation of Section \ref{quadrics}.
Let us put $n=2g+1, R=K[x],  E=K(x), \pi_i=x-\alpha_i,  p_i=2$  and let
$$\nu_i: E^{*}=K(x)^{*} \twoheadrightarrow \Z$$
be the discrete valuation map of the field of rational functions $K(x)$ attached to $\alpha_i$.
Then $K[\tilde{\CC}]=B_R/\mathcal{N}(B_R)$ where $\mathcal{N}(B_R)$ is the nilradical of $B_R$. It follows from Corollary \ref{irredExt} that $\mathcal{N}(B_R)=\{0\}$ and $K[\tilde{\CC}]$ has no zero divisors, i.e., $\tilde{\CC}$ is irreducible.

\end{ex}

\end{document}